\documentclass[12pt,reqno]{amsart}
\usepackage{graphicx}
\usepackage{verbatim}
\usepackage{textcomp}
\usepackage{amssymb}
\usepackage{cite}
\usepackage{amsmath}
\usepackage{latexsym}
\usepackage{amscd}
\usepackage{amsthm}
\usepackage{mathrsfs}
\usepackage{bm}
\usepackage{url}
\usepackage{hyperref}
\usepackage{bookmark}
\allowdisplaybreaks[3]
\newtheorem{thm}{Theorem}[section]
\newtheorem{corr}[thm]{Corollary}
\newtheorem{lem}[thm]{Lemma}
\newtheorem{prop}[thm]{Proposition}

\theoremstyle{definition}

\theoremstyle{remark}
\newtheorem{rem}{Remark}[section]
\numberwithin{equation}{section}
\begin{document}
\title[Universal gradient estimates for solutions]
{Universal gradient estimates for solutions of $\Delta_{p,f}u+au^{\sigma}\ln u=0$ on complete Riemannian manifolds}

\author{Jingxu Liu$^{1}$}
\author{Zhen Wang$^{2}$}

\address{School of Mathematics Science, South China Normal University, Guangzhou, 510631, P. R. China.}
\address{School of Mathematics and Statistics, Henan Normal University, Xinxiang, 453007, P. R. China.}

\email{jxl2024@126.com (J. Liu)}
\email{wangz2025@126.com (Z. Wang)}

\thanks{Corresponding author: wangz2025@126.com (Z. Wang)}

\begin{abstract}
In this paper, we consider the weighted $p$-Laplacian equation
$$ \Delta_{p,f}u+au^{\sigma}\ln u=0$$
defined on a complete smooth metric measure space under the conditon that the $m$-Bakry-\'{E}mery Ricci curvature has a lower bound, where $a$, $\sigma$ are two nonzero real constants. By applying the Nash-Moser iteration, we obtain sharp gradient estimates and thereby establish Liouville theorems for the above equation.
\end{abstract}

\subjclass[2010]{35B45; 35J92.}

\keywords{Gradient estimate, Nash-Moser iteration, Liouville type theorem.}

\maketitle

\section{Introduction}
Gradient estimates play a crucial role not only in the study of elliptic equations on Riemannian manifolds (see \cite{Bidaut2020,LiYau1986,Yau75}), but also in establishing Liouville type theorems. In particular, through this methods, some mathematicians have successfully investigated gradient estimates for partial differential equations on smooth metric measure spaces and then obtained Liouville theorems, see\cite{Brighton2013,Huangli2014,HuangZhao2023,lix05,lih15,lu2023,Ma06}.

The following Lane-Emden equation
\begin{equation}\label{1-Int-112}
\Delta u+u^\alpha=0
\end{equation}
with $\alpha\in\mathbb{R}$, serves as a fundamental model of semilinear elliptic equations. In\cite{Gidas1981}, Gidas and Spruck established several fundamental results for \eqref{1-Int-112}, including Liouville theorem as follows: Let $(M^n, g)$ be an $n$-dimensional $(n>2)$ complete Riemannian manifold with nonnegative Ricci curvature, and let $u$ be a nonnegative $C^2$ solution of \eqref{1-Int-112} on $M^n$ with $\alpha \in (1, \frac{n+2}{n-2})$, then they obtained $u\equiv0$.

As far as we know, no proof of this result other than the original one by Gidas and Spruck has appeared in the literature. In addition, as part of their work on the classification of singularities, they derived a key decay estimate and Harnack inequality, which played an essential role in their singularity analysis.

Recently, one paid attention to studying the following equation
\begin{equation}\label{1-Int-111}
\Delta u+a u\ln u+bu=0
\end{equation}
which is closely related to the famous Log-Sobolev inequality. By replacing $u$ by $e^{-\frac{b}{a}}u$ with $a,b$ two real constants, the equation \eqref{1-Int-111} reduces to
\begin{equation}\label{1-Int-222}
\Delta u+a u\ln u=0.
\end{equation}
Therefore, the study to \eqref{1-Int-222} is very interesting. The authors\cite{WJ24} obtained gradient estimates and a local Liouville theorem for nonlinear elliptic equations \eqref{1-Int-222}.

The $m$-Bakry-\'{E}mery Ricci curvature is defined as
\begin{equation}\label{1-Int-1}
\mathrm{Ric}_{f,m}=\mathrm{Ric}+\nabla^{2}f-\frac{1}{m-n}df\otimes df,
\end{equation}
where $m\geq n$ and $m=n$ if and only if $f$ is a constant. In this paper, we consider the weighted $p$-Laplacian equation
\begin{equation}\label{1-Int-2}
\Delta_{p,f}u+au^{\sigma}\ln u=0
\end{equation}
on an $n$-dimensional complete Riemannian manifold $(M^{n},g,d\mu)$, where $a$, $\sigma$ are two nonzero real constants and
\begin{equation}\label{1-Int-3}
\Delta_{p,f}u:=e^{f}\mathrm{div}(e^{-f}\lvert\nabla u\rvert^{p-2}\nabla u)
\end{equation}
is the weighted $p$-Laplacian with $p>1$. Obviously, \eqref{1-Int-2} can be seen as a generalization to \eqref{1-Int-222} (when $m=n$, $\sigma=1$ and $p=2$, the equation \eqref{1-Int-2} becomes
\eqref{1-Int-222}).

For $p$-Laplacian equations, under the sectional curvature conditions, some mathematicians (see\cite{Chen09,HuangLi2013,Wang23,
Yang08} and the references therein) obtain gradient estimates and Liouville theorems by using
the Hessian comparison theorem since the $p$-Laplacian is nonlinear. Through the above investigation, we see that for a class of $p$-Laplacian equations on complete Riemannian manifolds, the method of constructing cutoff functions only yields related gradient estimate results under sectional curvature conditions. This limitation arises because the construction of the distance function relies on the Hessian comparison theorem, which inevitably requires sectional curvature bounds. Our goal is to bypass the Hessian comparison theorem using the Nash-Moser iteration method, thereby obtaining results under the Ricci curvature condition
(for example \cite{WJ24,Wz2011,WYDWGD2026}).

Inspired by \cite{HeWangWei2024,WYWG2023,Song2025}, in what follows, we extend our study based on the equation \eqref{1-Int-2} and obtain the following results by using of Nash-Moser iteration:

\begin{thm}\label{thm-1}
Let $(M^{n},g,d\mu)$ be an $n$-dimensional $(n>2)$ complete smooth metric measure space with $\mathrm{Ric}_{f,m}\geq-(m-1)Kg$, where $K$ is a non-negative constant. Assume that $u$ is a positive weak solution to equation \eqref{1-Int-2} on the geodesic ball $B_{x_{0}}(R)\subset M$ with $a<0$, $p>1$ and $$0<\sigma<\frac{(m+2)(p-1)}{m},$$
then it holds
\begin{align}\label{1-Th-1}
\frac{\lvert\nabla u\rvert}{u^{1-\frac{m[p-(\sigma+1)]+2(p-1)}{(m+2)(p-1)}}}\leq\frac{C(1+\sqrt{K}R)}{R} \quad on \ B_{x_{0}}(\frac{R}{2}),
\end{align}
where $C=C(p,m,\sup_{B_{x_{0}}(R)} u)$ depends only on $p$, $m$ and $\sup_{B_{x_{0}}(R)} u$.

If we further assume that $p>m+1$, then \eqref{1-Th-1} still holds for $a<0$ and
$$\sigma\in \Big(-\infty,-\frac{(m+2)(p-1)}{p-(m+1)}\Big).$$
\end{thm}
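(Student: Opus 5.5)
We reduce the statement to a gradient bound for a power of $u$ and then run a Nash--Moser iteration on the weighted Bochner formula. Set
$$\beta:=\frac{m[p-(\sigma+1)]+2(p-1)}{(m+2)(p-1)}, \qquad v:=u^{\beta}\ \ (\text{or } v=-u^{\beta}\text{ if }\beta<0).$$
Then $|\nabla v|=|\beta|\,u^{\beta-1}|\nabla u|$, and \eqref{1-Th-1} is equivalent to the estimate $|\nabla v|\le C(1+\sqrt K R)/R$ on $B_{x_0}(R/2)$. The hypothesis $0<\sigma<(m+2)(p-1)/m$ is exactly $\beta\in\big(0,\tfrac{m(p-1)+2(p-1)}{(m+2)(p-1)}\big)\cap(\ldots)$, i.e. $0<\beta<1$ is arranged so that $v$ is positive with $\beta\neq 0$. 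In the second regime, $p>m+1$ and $\sigma<-(m+2)(p-1)/(p-m-1)$, we get $\beta>1$, with a range in which the same algebra closes.

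First we rewrite \eqref{1-Int-2} in terms of $v$. Since $u=v^{1/\beta}$, we get
$$\Delta_{p,f}v=-(\beta-1)\beta^{-1}(p-1)\frac{|\nabla v|^p}{v}-a\beta^{p-1}v^{\kappa}\ln v^{1/\beta},$$
with $\kappa$ explicit. Away from critical points we set $w=|\nabla v|^2$ and use the linearized operator
$$\mathcal L(\psi)=e^{f}\mathrm{div}\Big(e^{-f}w^{p/2-1}A(\nabla\psi)\Big), \qquad A=\mathrm{Id}+(p-2)\frac{\nabla v\otimes\nabla v}{w}.$$
The weighted $p$-Bochner formula, combined with $\mathrm{Ric}_{f,m}\ge-(m-1)K$ and the refined Kato/Cauchy--Schwarz inequality
$$|\nabla^2 v|_A^2\ge\frac{(\Delta_{p,f}v\ \text{part})^2}{m-1}+\ldots,$$
in which the $m$-dimensional term absorbs the $\frac{1}{m-n}df\otimes df$ piece, gives a pointwise inequality
$$\mathcal L(w)\ge 2\beta_{m,p}\,w^{p/2+1}v^{-2}(\ldots)-2(m-1)Kw^{p/2}-C w^{p/2-1/2}|\nabla w|/v+(\text{terms from } a u^\sigma\ln u).$$
The exponent $\beta$ is chosen precisely so that the coefficient of the good term $w^{p}/v^2$, coming from $(\Delta_{p,f}v)^2$, stays positive and so that the logarithmic source contributes a term of one sign. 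For $a<0$, the factor $-a\ln u$ combined with the choice of $\beta$ should make the cross term $w\cdot a v^{\kappa-1}\ln v$ either nonnegative or absorbable by Young's inequality into the leading term plus a constant depending on $\sup_{B_R}u$. This is where $C$ picks up its dependence on $\sup u$, since $\ln u$ is bounded above on the ball.

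\textbf{Main obstacle.} This sign and absorption analysis of the log term, uniformly in the two $\sigma$-ranges, is the step I expect to be delicate. It requires checking that a certain quadratic form in $(\Delta_{p,f}v, w/v)$ is positive definite exactly when $\sigma$ lies in the stated intervals. I would first compute the discriminant of that quadratic explicitly; the stated bounds on $\sigma$ should be where it changes sign.

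Having the differential inequality, we run Moser iteration in weak form, which avoids any Hessian comparison. Multiply by $w^{t}\eta^2$ with cutoff $\eta$ supported in $B_R$ and integrate by parts against $e^{-f}d\mu$. This produces the $L^{\frac{m}{m-2}}$ Sobolev inequality on metric measure spaces with $\mathrm{Ric}_{f,m}\ge-(m-1)K$ (Saloff-Coste type), with constant $e^{C(1+\sqrt K R)}V^{-2/m}R^2$. From it we get:
\begin{itemize}
\item an $L^{\gamma}$ bound $\|w\|_{L^{\gamma}(B_{3R/4})}\le C V^{1/\gamma}\big((1+\sqrt K R)/R\big)^2$ for a large $\gamma$ proportional to $1+\sqrt K R$, using the good term $w^{p/2+1}$ to absorb;
\item the standard iteration $t_k=\gamma\chi^k$, $\chi=m/(m-2)$, on shrinking balls, which yields $\sup_{B_{R/2}}w\le C\|w\|_{L^\gamma(B_{3R/4})}V^{-1/\gamma}$.
\end{itemize}
Combining the two gives $|\nabla v|\le C(1+\sqrt KR)/R$, and transforming back to $u$ gives \eqref{1-Th-1}.
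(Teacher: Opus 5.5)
\textbf{There is a genuine gap.} Your architecture matches the paper's: the same power $h=u^{\beta}$ (the paper's $\epsilon$), the weighted $p$-Bochner formula with $\mathrm{Ric}_{f,m}\ge-(m-1)Kg$, Saloff-Coste's Sobolev inequality, an $L^{q}$ bound on $B_{x_0}(3R/4)$ for $q$ comparable to $t_0=C_1(1+\sqrt K R)$, and Moser iteration. But the step you leave open as the ``main obstacle'' is the only non-routine one, and you are missing the idea that closes it: the exponent is chosen so that the logarithmic cross term vanishes identically.

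With $\gamma=(p-1)(\beta-1)/\beta$ and $\kappa=\gamma+\sigma/\beta$ one has $\Delta_{p,f}h=\gamma|\nabla h|^p/h-a\beta^{p-2}h^{\kappa}\ln h$. In $\frac1m(\Delta_{p,f}h)^2+|\nabla h|^{p-2}\langle\nabla h,\nabla\Delta_{p,f}h\rangle$ the term linear in $\ln h$ is
\[
-a\beta^{p-2}\Big(\frac{(m+2)\gamma}{m}+\frac{\sigma}{\beta}\Big)h^{\kappa-1}\ln h\,|\nabla h|^{p},
\]
and its coefficient is zero precisely for $\beta=1-\frac{m\sigma}{(m+2)(p-1)}$. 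With $Q=|\nabla h|^p$, what remains is:
\begin{itemize}
\item $\frac{\gamma(\gamma-m)}{m}Q^2/h^2$;
\item the gradient cross term $\frac{\gamma}{h}Q^{1-2/p}\langle\nabla h,\nabla Q\rangle$;
\item the curvature term;
\item $\frac{a^2}{m}\beta^{2(p-2)}h^{2\kappa}(\ln h)^2\ge0$;
\item $-a\beta^{p-2}h^{\kappa-1}Q\ge0$, since $a<0$ and $\beta>0$.
\end{itemize}
Viewed as a quadratic form in $|\nabla h|^p/h$ and $a h^{\kappa}\ln h$, it is diagonal, so there is no discriminant to analyze. The two $\sigma$-ranges are exactly $\beta>0$ together with $\gamma(\gamma-m)>0$, i.e.\ $\gamma<0$ or $\gamma>m$ (the latter forcing $p>m+1$).

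The alternatives you float would not give the theorem.
\begin{itemize}
\item The cross term cannot be nonnegative, since $\ln u$ changes sign on the ball.
\item Absorbing it by Young's inequality into the leading term ``plus a constant depending on $\sup u$'' puts an additive constant (depending also on $a$) into the differential inequality. The iteration then yields at best $|\nabla h|\le C(1+\sqrt KR)/R+C'$, which does not decay when $K=0$ and loses the Liouville corollary.
\item In the second regime $\kappa=-2\gamma/m<-2$, so $h^{\kappa}\ln h$ is unbounded as $u\to0$, and no such constant exists at all.
\end{itemize}

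You also misplace the source of the $\sup u$ dependence. In the paper it enters only in the integral step: $h\le M_0^{\beta}$ converts the good term $Q^{t+2}h^{-2}$ into $Q^{t+2}$, so that it can absorb $c_4t_0^2R^{-2}\int Q^{t+2-2/p}\eta^2$ on the level set $D$. Your sketch (``using the good term $w^{p/2+1}$'') drops the factor $v^{-2}$ without noting that this is exactly where the upper bound on $u$ is needed.
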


In particular by letting $R\rightarrow \infty$, we obtain the following Liouville theorem immediately:

\begin{corr}\label{corr-2}
Let $(M^{n},g,d\mu)$ be an $n$-dimensional $(n>2)$ complete smooth metric measure space with $\mathrm{Ric}_{f,m}\geq0$. For $a<0$, $p>1$ and $$0<\sigma<\frac{(m+2)(p-1)}{m},$$
then there does not exist any positive nontrivial bounded solution to equation \eqref{1-Int-2}.

If we further assume that $p>m+1$, for $a<0$ and
$$\sigma\in \Big(-\infty,-\frac{(m+2)(p-1)}{p-(m+1)}\Big),$$
then nor does there exist any positive nontrivial bounded solution to the equation \eqref{1-Int-2}.
\end{corr}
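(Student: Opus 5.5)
The plan is to deduce Corollary \ref{corr-2} directly from Theorem \ref{thm-1} by sending $R\to\infty$ for a fixed point.

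Let $u$ be a positive bounded solution of \eqref{1-Int-2} on all of $M$, and set $\Lambda=\sup_M u<\infty$. Since $\mathrm{Ric}_{f,m}\geq 0$, we may take $K=0$ in Theorem \ref{thm-1}. Fix any $x\in M$.

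For every $R>0$ we have $x\in B_x(R/2)$, and $\sup_{B_x(R)}u\le\Lambda$. The key check is that the constant $C=C(p,m,\sup_{B_x(R)}u)$ can be bounded uniformly in $R$. To see this, we look at the proof of Theorem \ref{thm-1}: the dependence on $\sup u$ enters through powers $u^{\gamma}$ and terms like $|\ln u|\,u^{\gamma}$ coming from the nonlinearity $au^\sigma\ln u$, and these are controlled monotonically by an upper bound of $u$. So $C$ may be replaced by $C(p,m,\Lambda)$. If needed, we re-run the last step of the Moser iteration with $\Lambda$ in place of the local supremum, which gives the same estimate.

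With $\beta:=1-\frac{m[p-(\sigma+1)]+2(p-1)}{(m+2)(p-1)}$, \eqref{1-Th-1} with $K=0$ then gives
\begin{equation*}
|\nabla u|(x)\le \frac{C(p,m,\Lambda)}{R}\,u(x)^{\beta}.
\end{equation*}
Here $u(x)^\beta$ is a fixed positive finite number. Letting $R\to\infty$ yields $\nabla u(x)=0$. Since $x$ was arbitrary and $M$ is connected, $u\equiv c>0$ is constant.

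Next we substitute the constant into the equation. We get $\Delta_{p,f}c=0$, so $ac^\sigma\ln c=0$. Because $a\neq0$ and $c>0$, this forces $\ln c=0$, i.e. $u\equiv1$. This constant is the trivial solution excluded by ``nontrivial''. Hence no positive nontrivial bounded solution exists.

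The second range of $\sigma$, under the hypothesis $p>m+1$, is handled identically, using the second part of Theorem \ref{thm-1}.

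The only delicate step is the uniformity of $C$ in $R$. The constant depends on $u$ only through an upper bound of $u$, so the global bound $\Lambda$ suffices. If for the negative-$\sigma$ range the constant also involved a lower bound of $u$, we would instead argue with the normalized quantity appearing in \eqref{1-Th-1}, which is scale-invariant in the needed way. I expect only upper bounds to appear, though.
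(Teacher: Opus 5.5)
Your argument is correct and is the paper's own route: the paper simply sets $K=0$ in Theorem \ref{thm-1} and lets $R\to\infty$. You add the two points the paper leaves implicit, namely that $C$ stays uniform in $R$ for a globally bounded $u$, and that the resulting constant must be $u\equiv1$. One small correction: in the paper's proof the terms coming from $au^\sigma\ln u$ are either cancelled by the choice of exponent or dropped by sign (this is where $a<0$ is used). Hence $\sup u$ enters only through $h^{-2}\ge M_0^{-2\epsilon}$, with $\epsilon=\frac{m[p-(\sigma+1)]+2(p-1)}{(m+2)(p-1)}>0$ in both ranges. So replacing $M_0$ by $\sup_M u$ is immediate, and no lower bound on $u$ is ever needed.
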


\begin{rem}\label{rem-1}
In particular, if $\sigma=p-1\in\Big(0,\frac{(m+2)(p-1)}{m}\Big)$, then the equation \eqref{1-Int-2} becomes
$$
\Delta_{p,f}u+au^{p-1}\ln u=0,
$$
then our results is similar to that in \cite{Huang2026} (see Theorem 1.1) but improve the range $p>2$ to $p>1$.
\end{rem}
\begin{rem}\label{rem-2}
Compared with the previous work \cite{HLZ2026}, when $\sigma=1$ our equation \eqref{1-Int-2} becomes
$$
\Delta_{p,f}u+au\ln u=0,
$$
which is same as the equation in \cite{HLZ2026} when $a(x)\equiv C$ (see equation(1.3)), but our results improve the range $p>2$ to $p>1$.
\end{rem}
\section{Proof of results}
It is well-known that a function $u$ is said to be a positive weak solution to equation \eqref{1-Int-2} if $u$ satisfies
$$\int_M|\nabla u|^{p-2}\langle\nabla u,\nabla \phi\rangle d\mu=\int_Ma(x)(u+B)^\alpha[\ln(u+C)]^\sigma\phi d\mu$$
for all $\phi\in C_{0}^\infty(M)$. Denote $h=u^{\epsilon}$, where $\epsilon\neq0$ is a constant to be determined later. The weighted elliptic operator $\mathcal{L}_{f}$ is defined by
$$\mathcal{L}_{f}=e^{f}\mathrm{div}(e^{-f}|\nabla h|^{p-2}A(\nabla\cdot)),$$
where
$$A=Id+(p-2)\frac{\nabla h\otimes\nabla h}{|\nabla h|^2}.$$
Because equation \eqref{1-Int-2} can be either degenerate or singular at the points $\{|\nabla u|=0\}$, which is $\{|\nabla h|=0\}$, we replace the linearized $\mathcal{L}_{f}$ with its approximate operator, i.e.,
$$\mathcal{L}_{f,\varepsilon}=e^{f}\mathrm{div}(e^{-f}\omega_{\varepsilon}^{\frac{p}{2}-1}A_{\varepsilon}
(\nabla\cdot)),$$
where $\varepsilon>0$, $\omega_{\varepsilon}=|\nabla h_{\varepsilon}|^2+\varepsilon$, $A_{\varepsilon}=Id+(p-2)\frac{\nabla h_{\varepsilon}\otimes\nabla h_{\varepsilon}}{|\nabla h|^2}$. From the gradient estimate in \cite{BK09}, we know that $u\in C^{1,\alpha}$ for some $\alpha>0$ and $u\in W^{1,\beta}$ for some $\beta>1$. In fact, $u$ is smooth away from $\{|\nabla u|=0\}$. In order to avoid tedious presentation, we omit the details. The interested reader can refer to \cite{BK09} for more details.

Then, we prove the following lemma:
\begin{lem}\label{lem-1}
Let $(M^n,g,d\mu)$ be a complete smooth metric measure space with $\mathrm{Ric}_{f,m}\geq-(m-1)Kg$, where $K$ is a nonnegative constant.
Let $u$ be a positive weak solution of \eqref{1-Int-2} and the constants $p$, $\sigma$ satisfy conditions of Theorem \ref{thm-1}, for $h=u^{\frac{m[p-(\sigma+1)]+2(p-1)}{(m+2)(p-1)}}$ and $Q=\lvert\nabla h\rvert^{p}$, then
\begin{align}
\frac{1}{p}\mathcal{L}_{f}(Q)\geq&\frac{\sigma^{2}m(p-1)^{2}+\sigma m^{2}(p-1)(p-\sigma-1)+2\sigma m(p-1)^{2}}{[m(p-\sigma-1)+2(p-1)]^{2}}\frac{Q^{2}}{h^{2}}\nonumber\\&-\frac{\sigma m(p-1)\nabla h\nabla Q}{m(p-\sigma-1)+2(p-1)}\frac{Q^{1-\frac{2}{p}}}{h}-(m-1)KQ^{2-\frac{2}{p}}\notag\\
&-a\big[\frac{m(p-\sigma-1))+2(p-1)}{(m+2)(p-1)}\big]^{p-2}h^{\frac{(2\sigma-m-2)(p-1)+m\sigma}{(m+2)(p-1)-m\sigma}}Q.
\end{align}
\end{lem}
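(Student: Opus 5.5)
We work at points where $\nabla h\neq 0$, where $u$ is smooth. The plan is to combine three ingredients: the weighted Bochner formula for the linearized $p$-Laplacian, the equation rewritten for $h=u^{\epsilon}$, and a refined Kato-type inequality from the $m$-Bakry--\'Emery curvature.

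\textbf{Step 1 (equation for $h$).} Set $\epsilon=\frac{m[p-(\sigma+1)]+2(p-1)}{(m+2)(p-1)}$, so $u=h^{1/\epsilon}$. A direct computation from \eqref{1-Int-2} gives
$$\Delta_{p,f}h=\frac{(1-\epsilon)(p-1)}{\epsilon}\frac{|\nabla h|^{p}}{h}-a\,\epsilon^{p-2}\,h^{\gamma}\ln h,$$
where the exponent $\gamma$ comes from $u^{\sigma}\ln u=\epsilon^{-1}h^{\sigma/\epsilon}\ln h$ together with the factors $\epsilon^{p-1}h^{(\epsilon-1)(p-1)/\epsilon}$. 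One checks that $\gamma$ is the exponent $\frac{(2\sigma-m-2)(p-1)+m\sigma}{(m+2)(p-1)-m\sigma}$ displayed in Lemma \ref{lem-1}, possibly after the log factor is absorbed. Tracking how the $\ln h$ term ends up as stated will need care. Note also that $1-\epsilon=\frac{m\sigma}{(m+2)(p-1)}$, which is why the factors $\sigma m(p-1)$ appear.

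\textbf{Step 2 (Bochner identity).} Next apply the standard formula, valid where $\nabla h\neq0$:
$$\frac1p\mathcal L_f(Q)=|\nabla h|^{2p-4}\big(|\nabla^2h|_A^2+\mathrm{Ric}_f(\nabla h,\nabla h)\big)+|\nabla h|^{p-2}\langle\nabla h,\nabla\Delta_{p,f}h\rangle,$$
with $|\nabla^2 h|_A^2=|\nabla^2h|^2+\frac{p-2}{2}\frac{|\nabla|\nabla h|^2|^2}{|\nabla h|^2}+\dots$ in the usual form. Here $\mathrm{Ric}_f=\mathrm{Ric}_{f,m}+\frac{1}{m-n}df\otimes df$.

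\textbf{Step 3 (Hessian lower bound).} In an orthonormal frame with $e_1=\nabla h/|\nabla h|$, bound $\sum_{i\ge2}h_{ii}^2\ge\frac{1}{n-1}(\sum_{i\ge 2}h_{ii})^2$. Then express $\sum_{i\ge2}h_{ii}$ through $\Delta_{p,f}h$, $h_{11}$ and $\langle\nabla f,\nabla h\rangle$. Combining with the $\frac{1}{m-n}\langle\nabla f,\nabla h\rangle^2$ term via Cauchy--Schwarz, $\frac{x^2}{n-1}+\frac{y^2}{m-n}\ge\frac{(x+y)^2}{m-1}$, yields
$$|\nabla^2h|_A^2+\mathrm{Ric}_f(\nabla h,\nabla h)\ge \frac{1}{m-1}\Big(\frac{\Delta_{p,f}h}{|\nabla h|^{p-2}}-(p-1)h_{11}\Big)^2+\text{(nonnegative } h_{1i}\text{ terms)}-(m-1)K|\nabla h|^2.$$

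\textbf{Step 4 (choosing the square).} Substitute Step 1 into the square and into the gradient term $\langle\nabla h,\nabla\Delta_{p,f}h\rangle$. The latter produces $\frac{(1-\epsilon)(p-1)}{\epsilon}\big(\frac{\langle\nabla h,\nabla Q\rangle}{h}-\frac{Q|\nabla h|^2}{h^2}\big)$ plus the derivative of the $a$-term. Expanding the square and writing $h_{11}$ in terms of $\langle\nabla h,\nabla Q\rangle$, the quadratic $h_{11}^2$ part stays nonnegative and the cross terms are controlled by dropping or completing squares. The particular $\epsilon$ is precisely chosen so that the coefficient mixing $h_{11}$ with $Q/h$ vanishes or becomes favorable. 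The result is the stated positive coefficient $\frac{\sigma^2m(p-1)^2+\sigma m^2(p-1)(p-\sigma-1)+2\sigma m(p-1)^2}{[m(p-\sigma-1)+2(p-1)]^2}$ in front of $Q^2/h^2$, together with the $\nabla h\nabla Q$ term. For the terms involving $a$ (with $a<0$), we keep only the sign-definite linear part $-a\epsilon^{p-2}h^{\gamma}Q$ and discard the squared $a$-contribution, which is nonnegative. The mixed term $a\,h^{\gamma-1}\ln h\cdot Q$ is the delicate one; it should be handled by $a<0$ and the sign of its coefficient in the admissible $\sigma$-ranges.

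\textbf{Main obstacle.} I expect this last bookkeeping step to be the hardest: making the algebra of Step 4 produce exactly the stated coefficients, and controlling the logarithmic cross terms, whose sign depends on $\ln h$, using only $a<0$ and the $\sigma$-ranges. I would first try completing the square in $h_{11}$ with weight $\frac{1}{m-1}$. If a residual $\ln h$ term survives with the wrong sign, I would bound it using $h\le\sup h$, which is plausible given that $C$ depends on $\sup u$.
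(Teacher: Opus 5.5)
There is a genuine gap, and it is the step you flagged yourself. You plan to control the term proportional to $a\,h^{\cdots}\ln h\,Q$ either by the sign of $a$ and of its coefficient, or by $h\le\sup h$. Neither can work. $\ln h$ has no fixed sign on the ball, so no condition on $a$ or $\sigma$ makes this term sign-definite. Estimating it through $\sup h$ would leave a logarithmic term, with $\sup u$-dependent constants, that does not appear in the inequality to be proved. The missing idea is that the prescribed exponent $\epsilon$ is chosen precisely so that the total coefficient of this term is \emph{zero}. Its role has nothing to do with the mixing of $h_{11}$ and $Q/h$, as you suggest.

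Concretely, set $\gamma=\frac{(p-1)(\epsilon-1)}{\epsilon}$; then $\Delta_{p,f}h=\gamma\frac{Q}{h}-a\epsilon^{p-2}h^{\gamma+\sigma/\epsilon}\ln h$. Your Step 1 has the coefficient $\frac{(1-\epsilon)(p-1)}{\epsilon}=-\gamma$ instead. With that sign the gradient term comes out with the opposite sign and the $Q^2/h^2$ coefficient becomes $\frac{\gamma^2}{m}+\gamma$, so the stated constants cannot be reached. Also, the displayed exponent is $\gamma+\frac{\sigma}{\epsilon}-1$, one less than the exponent appearing in $\Delta_{p,f}h$. Now use the plain bound $|\nabla h|^{2p-4}\big(|\nabla^2h|_A^2+\mathrm{Ric}_f(\nabla h,\nabla h)\big)\ge\frac1m(\Delta_{p,f}h)^2+|\nabla h|^{2p-4}\mathrm{Ric}_{f,m}(\nabla h,\nabla h)$. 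The $\ln h$ terms then come from two places: the cross term of $\frac1m(\Delta_{p,f}h)^2$, and differentiating $h^{\gamma+\sigma/\epsilon}\ln h$ inside $|\nabla h|^{p-2}\langle\nabla h,\nabla\Delta_{p,f}h\rangle$. They add up to
\[
-a\epsilon^{p-2}\Big(\frac{(m+2)\gamma}{m}+\frac{\sigma}{\epsilon}\Big)h^{\gamma+\frac{\sigma}{\epsilon}-1}\ln h\;Q .
\]
This coefficient vanishes if and only if $(m+2)(p-1)(\epsilon-1)+m\sigma=0$, which is exactly the given $\epsilon$. After that, the only thing you drop is $\frac{a^2}{m}\epsilon^{2(p-2)}h^{2(\gamma+\sigma/\epsilon)}(\ln h)^2\ge0$, so no sign of $a$ is needed for this lemma. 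The coefficient $\frac{\gamma^2}{m}-\gamma$, with $\gamma=-\frac{\sigma m(p-1)}{m(p-\sigma-1)+2(p-1)}$, is then exactly the stated one.

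Your refined Step 3 actually works against this cancellation. Write $X=\Delta_{p,f}h/|\nabla h|^{p-2}$ and keep $\frac{1}{m-1}(X-(p-1)h_{11})^2$ as you propose. This replaces $\frac{2\gamma}{m}$ by $\frac{2\gamma}{m-1}$ and creates an $h_{11}\ln h$ cross term, and this $\epsilon$ cancels neither. You must first minimize in $h_{11}$ using $(p-1)^2h_{11}^2+\frac{1}{m-1}(X-(p-1)h_{11})^2\ge\frac{X^2}{m}$. That returns you to the plain $\frac1m(\Delta_{p,f}h)^2$ bound, which is what the paper uses.
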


\proof
We define $h=u^{\epsilon}$, where $\epsilon\neq0$ is a constant to be determined. By a straightforward computation, we have
\begin{align}\label{Prel-2}
\Delta_{p,f}h=&\Delta_{p,f}(u^{\epsilon})=e^{f}\mathrm{div}(e^{-f}\lvert \nabla(u^{\epsilon}) \rvert^{p-2}\nabla(u^{\epsilon}))\nonumber\\
=&\epsilon^{p-1}[(p-1)(\epsilon-1)u^{(p-1)(\epsilon-1)-1}\lvert \nabla u \rvert^{p}+u^{(p-1)(\epsilon-1)}\Delta_{p,f}u]\nonumber\\
=&\gamma\frac{\lvert \nabla h\rvert^{p}}{h}-a\epsilon^{p-2}h^{\gamma+\frac{\sigma}{\epsilon}}\ln h,
\end{align}
where $\gamma=\frac{(p-1)(\epsilon-1)}{\epsilon}$, which gives
\begin{align}\label{Prel-3}
\nabla h\nabla \Delta_{p,f}h=&\nabla h\nabla\Big(\gamma\frac{\lvert \nabla h\rvert^{p}}{h}-a\epsilon^{p-2}h^{\gamma+\frac{\sigma}{\epsilon}}\ln h\Big)\nonumber\\
=&\frac{\gamma}{h}\nabla h\nabla\lvert \nabla h \rvert^{p}-a\epsilon^{p-2}\Big(\gamma+\frac{\sigma}{\epsilon}\Big)h^{\gamma+\frac{\sigma}{\epsilon}-1}\ln h\lvert\nabla h\rvert^{2}\nonumber\\&-a\epsilon^{p-2}h^{\gamma+\frac{\sigma}{\epsilon}-1}\lvert\nabla h\rvert^{2}-\frac{\gamma}{h^{2}}\lvert \nabla h \rvert^{p+2}.
\end{align}
The weighted $p$-Bochner formula (see \cite{WL2016}) with respect to the function $h$ is
\begin{align}\label{Prel-4}
\frac{1}{p}\mathcal{L}_{f}(\lvert \nabla h \rvert^{p})=&\lvert \nabla h \rvert^{2p-4}(\lvert \mathrm{Hess h} \rvert_{A}^{2}+\mathrm{Ric}_{f}(\nabla h,\nabla h))+\lvert \nabla h \rvert^{p-2}\nabla h\nabla\Delta_{p,f}h\nonumber\\
\geq&\frac{(\Delta_{p,f}h)^{2}}{m}+\lvert \nabla h \rvert^{2p-4}\mathrm{Ric}_{f,m}(\nabla h,\nabla h)+\lvert \nabla h \rvert^{p-2}\nabla h\nabla\Delta_{p,f}h.
\end{align}
Inserting \eqref{Prel-3} into \eqref{Prel-4}, we have
\begin{align}\label{Prel-5}
\frac{1}{p}\mathcal{L}_{f}(\lvert \nabla h \rvert^{p})\geq&\frac{1}{m}\Big(\gamma\frac{\lvert \nabla h\rvert^{p}}{h}-a\epsilon^{p-2}h^{\gamma+\frac{\sigma}{\epsilon}}\ln h\Big)^{2}-(m-1)K\lvert \nabla h\rvert^{2p-2}\nonumber\\
&+\lvert \nabla h \rvert^{p-2}\Big[\frac{\gamma}{h}\nabla h\nabla\lvert \nabla h \rvert^{p}-a\epsilon^{p-2}\Big(\gamma+\frac{\sigma}{\epsilon}\Big)h^{\gamma+\frac{\sigma}{\epsilon}-1}\ln h\lvert\nabla h\rvert^{2}\nonumber\\
&-\frac{\gamma}{h^{2}}\lvert \nabla h \rvert^{p+2}-a\epsilon^{p-2}h^{\gamma+\frac{\sigma}{\epsilon}-1}\lvert\nabla h\rvert^{2}\Big]\nonumber\\
=&\Big(\frac{\gamma^{2}}{m}-\gamma\Big)\frac{\lvert \nabla h \rvert^{2p}}{h^{2}}+\frac{\gamma}{h}\lvert \nabla h \rvert^{p-2}(\nabla h\nabla\lvert \nabla h\rvert^{p})-(m-1)K\lvert \nabla h\rvert^{2p-2}\nonumber\\
&-a\epsilon^{p-2}\Big[\frac{\gamma(m+2)}{m}+\frac{\sigma}{\epsilon}\Big]h^{\gamma+\frac{\sigma}{\epsilon}-1}\ln h\lvert\nabla h\rvert^{p}-a\epsilon^{p-2}h^{\gamma+\frac{\sigma}{\epsilon}-1}\lvert\nabla h\rvert^{p}\nonumber\\
&+\frac{a^{2}}{m}\epsilon^{2(p-2)}h^{2(\gamma+\frac{\sigma}{\epsilon})}(\ln h)^{2}.
\end{align}
Now we choose $\epsilon=\frac{m[p-(\sigma+1)]+2(p-1)}{(m+2)(p-1)}$ such that $\frac{\gamma(m+2)}{m}+\frac{\sigma}{\epsilon}=0$, then $\gamma=\frac{-\sigma m(p-1)}{m[p-(\sigma+1)]+2(p-1)}$. To ensure $\epsilon>0$ and $\frac{\gamma^{2}}{m}-\gamma>0$, we need to let
$$p>1,\quad \sigma\in\Big(0,\frac{(m+2)(p-1)}{m}\Big)$$
or
$$p> m+1,\quad \sigma\in \Big(-\infty,-\frac{(m+2)(p-1)}{p-(m+1)}\Big) \cup \Big(0,\frac{(m+2)(p-1)}{m}\Big).$$
Then in this case, \eqref{Prel-5} becomes
\begin{align}\label{Prel-6}
\frac{1}{p}\mathcal{L}_{f}(\lvert \nabla h \rvert^{p})\geq&\Big(\frac{\gamma^{2}}{m}-\gamma\Big)\frac{\lvert \nabla h \rvert^{2p}}{h^{2}}+\frac{\gamma}{h}\lvert \nabla h \rvert^{p-2}(\nabla h\nabla\lvert \nabla h\rvert^{p})-(m-1)K\lvert \nabla h\rvert^{2p-2}\nonumber\\
&+\frac{a^{2}}{m}\epsilon^{2(p-2)}h^{2(\gamma+\frac{\sigma}{\epsilon})}(\ln h)^{2}-a\epsilon^{p-2}h^{\gamma+\frac{\sigma}{\epsilon}-1}\lvert\nabla h\rvert^{p}\nonumber\\
\geq&\frac{\gamma(\gamma-m)}{m}\frac{ Q^{2}}{h^{2}}+\frac{\gamma}{h}Q^{1-\frac{2}{p}}\nabla h\nabla Q-(m-1)KQ^{2-\frac{2}{p}}-a\epsilon^{p-2}h^{\gamma+\frac{\sigma}{\epsilon}-1}Q\nonumber\\
=&\frac{\sigma^{2}m(p-1)^{2}+\sigma m^{2}(p-1)(p-\sigma-1)+2\sigma m(p-1)^{2}}{[m(p-\sigma-1)+2(p-1)]^{2}}\frac{Q^{2}}{h^{2}}\nonumber\\&-\frac{\sigma m(p-1)\nabla h\nabla Q}{m(p-\sigma-1)+2(p-1)}\frac{Q^{1-\frac{2}{p}}}{h}-(m-1)KQ^{2-\frac{2}{p}}\nonumber\\&-a\big[\frac{m(p-\sigma-1))
+2(p-1)}{(m+2)(p-1)}\big]^{p-2}h^{\frac{(2\sigma-m-2)(p-1)+m\sigma}{(m+2)(p-1)-m\sigma}}Q.
\end{align}
So the proof of Lemma \ref{lem-1} is completed.

Now, let us recall the following Saloff-Coste's Sobolev embedding theorem which will be helpful very much in the proof of our results:
\begin{prop}
(\cite{Saloff1992,Zhao18}) Let $(M^n,g,d\mu)$ be an $n$-dimensional $(n>2)$ complete smooth metric measure space with $\mathrm{Ric}_{f,m}\geq -(m-1)Kg$, where $K$ is a nonnegative constant. For $m\geq3$, there exists some positive constant $C(m)$ depending only on $m$, such that for all $\Omega=B_{x_{0}}(R)\subset M$ and $\varphi \in C_{0}^{\infty}(\Omega)$, it holds that
\begin{equation}\label{Prel-7}
\Big(\int_{\Omega}\lvert \varphi\rvert^{\frac{2m}{m-2}}d\mu\Big)^{\frac{m-2}{m}}\leq e^{C(m)(1+\sqrt{K}R)}V_{f}^{-\frac{2}{m}}R^{2}\int_{\Omega}(\lvert \nabla\varphi\rvert^{2}+R^{-2}\varphi^{2})d\mu,
\end{equation}
where $V_{f}=\int_{\Omega}d\mu$.
\end{prop}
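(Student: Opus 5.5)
The plan is to reproduce Saloff-Coste's argument, with the Riemannian measure replaced by $d\mu=e^{-f}dv$ and the dimension $n$ replaced by the synthetic dimension $m$. Saloff-Coste's theorem is abstract: on a metric measure space, a local volume doubling property together with a local scale-invariant $L^2$ Poincar\'e inequality on balls implies a local Sobolev inequality. The Sobolev exponent is governed by the doubling exponent, and the constant is controlled explicitly by the doubling and Poincar\'e constants. So the proof reduces to checking these two geometric inputs for $(M,g,d\mu)$ under $\mathrm{Ric}_{f,m}\geq-(m-1)Kg$, with constants of the form $e^{C(m)(1+\sqrt K R)}$.

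\textbf{Step 1: weighted volume comparison.} From the Bochner formula for $\Delta_f$ and the curvature assumption, one gets the Bakry--Qian (Wei--Wylie type) Laplacian comparison for the distance function $r$ to $x_0$:
\[
\Delta_f r\leq (m-1)\sqrt K\coth(\sqrt K r),
\]
in the barrier sense. Integrating along geodesics gives the relative volume comparison: $V_f(x,s)/V_f(x,r)$ is bounded below by the ratio of the corresponding volumes in the $m$-dimensional space form of curvature $-K$. Consequently, for all $0<r<s\leq R$,
\[
\frac{V_f(x,s)}{V_f(x,r)}\leq \Big(\frac{s}{r}\Big)^m e^{(m-1)\sqrt K s}.
\]
This is doubling with exponent $m$ and constant $e^{C(m)(1+\sqrt K R)}$ at all scales up to $R$.

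\textbf{Step 2: weighted Neumann Poincar\'e inequality.} For every ball $B\subset B_{x_0}(R)$ of radius $r$ we need
\[
\int_B|\varphi-\varphi_B|^2d\mu\leq e^{C(m)(1+\sqrt K R)}r^2\int_{2B}|\nabla\varphi|^2d\mu .
\]
I would prove this via Buser's approach or the segment inequality of Cheeger--Colding. Both rest only on the Laplacian comparison of Step 1, which controls the Jacobian of the $f$-weighted exponential map by the $m$-dimensional model. The weak (doubled-ball) version then upgrades to the strong version by a Whitney/chaining argument, which uses only the doubling from Step 1.

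\textbf{Step 3: conclusion.} Apply Saloff-Coste's theorem (Grigor'yan and Saloff-Coste) with doubling exponent $m\geq3$. It yields, for $\varphi\in C_0^\infty(B_{x_0}(R))$,
\[
\|\varphi\|_{2m/(m-2)}^2\leq C\,V_f^{-2/m}R^2\int(|\nabla\varphi|^2+R^{-2}\varphi^2)\,d\mu,
\]
with $C$ a power of the product of the doubling and Poincar\'e constants, hence of the form $e^{C(m)(1+\sqrt KR)}$. Taking the ball $B_{x_0}(R)$ itself gives the stated form with $V_f=\mu(\Omega)$. The condition $m\geq3$ is needed so that $2m/(m-2)$ is finite.

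The main obstacle is Step 2, specifically keeping track of the explicit dependence of the constant as $e^{C(m)(1+\sqrt K R)}$ while only the weighted Ricci bound is available. Conditions on $f$ such as boundedness must not be used. The segment-inequality route handles this cleanly, because the only input is the $m$-dimensional comparison of the weighted Jacobian along geodesics, which follows from $\mathrm{Ric}_{f,m}\geq-(m-1)K$ exactly as in the unweighted case. This is the argument I would follow, as in \cite{Zhao18}.
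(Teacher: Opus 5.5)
Your outline is correct and matches the approach behind the paper's citation. The paper gives no proof of its own: it invokes Saloff-Coste's theorem, in which volume doubling with exponent $m$ (from the Bakry--Qian comparison under $\mathrm{Ric}_{f,m}\geq-(m-1)Kg$) together with a weighted Neumann Poincar\'e inequality yields the local Sobolev inequality with constant $e^{C(m)(1+\sqrt{K}R)}$, as extended to the weighted setting in the references it cites. One small slip: in Step 1 the ratio $V_f(x,s)/V_f(x,r)$ is bounded \emph{above}, not below, by the model-space ratio, which is what your displayed inequality correctly states.
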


Now, we can prove the following integral inequality for the solutions to equation \eqref{1-Int-2}:

\begin{lem}\label{lem-2}
Let $\Omega=B_{x_{0}}(R)\subset M$ be a geodesic ball. Under the same assumptions as in Lemma \ref{lem-1}, by letting $M_{0}:=\sup_{B_{x_{0}}(R)} u$, for $a<0$ we have
\begin{align}\label{Prel-8}
&\frac{a_{5}}{2}\int_{\Omega}Q^{t+2}\eta^{2}d\mu+\frac{c_{2}}{t}e^{-C(m)(1+\sqrt{K}R)}V_{f}^{\frac{2}{m}}R^{-2}\Big(\int_{\Omega} Q^{(\frac{t}{2}+1-\frac{1}{p})\frac{2m}{m-2}}\eta^{\frac{2m}{m-2}}d\mu\Big)^{\frac{m-2}{m}}\nonumber\\\leq&\frac{c_{3}}{t}
\int_{\Omega}Q^{t+2-\frac{2}{p}}\lvert\nabla\eta\rvert^{2}d\mu
+c_{4}t_{0}^{2}R^{-2}\int_{\Omega}Q^{t+2-\frac{2}{p}}\eta^{2}d\mu,
\end{align}
where $c_{2},\ c_{3},\ c_{4}$ and $a_{5}$ depend only on $m, p, M_{0}$.
\end{lem}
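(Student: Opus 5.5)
We test the pointwise inequality of Lemma \ref{lem-1} against $\psi=Q_\varepsilon^{t}\eta^{2}$, where $Q_\varepsilon=\omega_\varepsilon^{p/2}$ (we argue with $\mathcal{L}_{f,\varepsilon}$ and let $\varepsilon\to0$ at the end), $\eta\in C_0^\infty(\Omega)$ is a cutoff and $t$ is a large parameter. Since $\psi\ge0$ is admissible, the weak form of Lemma \ref{lem-1} reads
\begin{align*}
-\frac1p\int_\Omega |\nabla h|^{p-2}\langle A\nabla Q,\nabla\psi\rangle d\mu\ \ge\ &a_1\int_\Omega \frac{Q^{2}}{h^{2}}\psi\,d\mu-a_2\int_\Omega \frac{Q^{1-\frac2p}}{h}|\nabla h||\nabla Q|\psi\,d\mu\\
&-(m-1)K\int_\Omega Q^{2-\frac2p}\psi\,d\mu-a\,a_3\int_\Omega h^{\kappa}Q\psi\,d\mu ,
\end{align*}
with $a_1=\gamma(\gamma-m)/m>0$ by the choice of $\epsilon$ and the range of $\sigma$, $a_2=|\gamma|$, and $a_3=\epsilon^{p-2}>0$. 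Because $a<0$, the last term is nonnegative and can be dropped; this is the only place where $a<0$ is used. Since $h=u^{\epsilon}$ with $\epsilon>0$ and $u\le M_0$, we have $h\le M_0^{\epsilon}$, so $a_1Q^2/h^2\ge a_1M_0^{-2\epsilon}Q^2$. This gives a coercive term $a_5' Q^{t+2}\eta^2$ with $a_5'$ depending on $m,p,M_0$. Expanding $\nabla\psi=tQ^{t-1}\eta^2\nabla Q+2Q^t\eta\nabla\eta$ and using $\langle A\nabla Q,\nabla Q\rangle\ge \min(1,p-1)|\nabla Q|^2$, the left side yields the good gradient term $\frac{t}{p}\min(1,p-1)\int Q^{t-\frac2p}|\nabla Q|^2\eta^2$ (using $|\nabla h|^{p-2}=Q^{1-2/p}$) together with the cross term $\int Q^{t+1-\frac2p}|\nabla Q|\eta|\nabla\eta|$.

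Next we absorb the mixed terms with Young's inequality. The term $a_2 Q^{t+1-\frac2p}|\nabla h||\nabla Q|\eta^2/h=a_2Q^{t+1-\frac1p}|\nabla Q|\eta^2/h$ is split as $\delta t\,Q^{t-\frac2p}|\nabla Q|^2\eta^2+\frac{C}{t}Q^{t+2}\eta^2/h^2$. For $t$ large the second piece is absorbed into $a_1Q^{t+2}/h^2$, leaving half of it; this is the main point requiring care, since it forces $t\ge t_0$ with $t_0$ depending on $m,p$. The cross term with $\nabla\eta$ is bounded by $\delta tQ^{t-\frac2p}|\nabla Q|^2\eta^2+\frac{C}{t}Q^{t+2-\frac2p}|\nabla\eta|^2$. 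For the curvature term $(m-1)KQ^{t+2-\frac2p}\eta^2$, we choose $t_0=c(m,p)(1+\sqrt K R)$, so that $K\le t_0^2R^{-2}$ and this term is dominated by $c_4t_0^2R^{-2}\int Q^{t+2-\frac2p}\eta^2$. What remains is
$$\frac{a_5}{2}\int Q^{t+2}\eta^2+c\,t\int Q^{t-\frac2p}|\nabla Q|^2\eta^2\le \frac{C}{t}\int Q^{t+2-\frac2p}|\nabla\eta|^2+c_4t_0^2R^{-2}\int Q^{t+2-\frac2p}\eta^2 .$$

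Finally we apply the Sobolev inequality \eqref{Prel-7} to $\varphi=Q^{\frac t2+1-\frac1p}\eta$. We have $|\nabla\varphi|^2\le 2(\frac t2+1-\frac1p)^2Q^{t-\frac2p}|\nabla Q|^2\eta^2+2Q^{t+2-\frac2p}|\nabla\eta|^2$, so $\frac{c}{t}\int|\nabla\varphi|^2$ is controlled by $t\int Q^{t-\frac2p}|\nabla Q|^2\eta^2$ plus the $|\nabla\eta|^2$ term, and the extra $R^{-2}\varphi^2$ term is absorbed into the $t_0^2R^{-2}$ term. Multiplying \eqref{Prel-7} by $\frac{c_2}{t}e^{-C(m)(1+\sqrt KR)}V_f^{2/m}R^{-2}$ and inserting the estimate above yields \eqref{Prel-8}. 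Passing $\varepsilon\to0$ uses the $C^{1,\alpha}$ regularity from \cite{BK09}.
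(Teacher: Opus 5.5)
Your proposal is correct and follows essentially the paper's argument. You test Lemma \ref{lem-1}, with the $a$-term dropped since $a<0$, against $Q^{t}\eta^{2}$; absorb the mixed $\nabla h$ and $\nabla\eta$ terms by Cauchy--Schwarz for $t\ge t_0$ large (keeping the $h^{-2}$ weight until after absorption); use $h\le M_0^{\epsilon}$ to get $a_5$; and feed $\varphi=Q^{\frac t2+1-\frac1p}\eta$ into the Saloff-Coste inequality with $t_0=C_1(1+\sqrt K R)$.

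The only difference is cosmetic. You use the ellipticity bounds $\langle A\xi,\xi\rangle\ge\min\{1,p-1\}|\xi|^{2}$ and $\|A\|\le\max\{1,p-1\}$ to treat all $p>1$ at once, whereas the paper splits into $p\ge2$ and $1<p<2$. Your version also gets the right constant for the $\nabla\eta$ cross term when $p<2$ (it should be $2$, not the paper's $2(p-1)$), though this does not affect the conclusion.
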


\proof
For $a<0$, it follows from \eqref{lem-1} that
\begin{align}\label{Prel-9}
\frac{1}{p}\mathcal{L}_{f}(Q)\geq&\frac{\sigma^{2}m(p-1)^{2}+\sigma m^{2}(p-1)(p-\sigma-1)+2\sigma m(p-1)^{2}}{[m(p-\sigma-1)+2(p-1)]^{2}}\frac{Q^{2}}{h^{2}}\nonumber\\&-\frac{\sigma m(p-1)\nabla h\nabla Q}{m(p-\sigma-1)+2(p-1)}\frac{Q^{1-\frac{2}{p}}}{h}-(m-1)KQ^{2-\frac{2}{p}}.
\end{align}
Let $\psi=Q^{t}\eta^{2}$, where $\eta\in C_{0}^{\infty}(\Omega)$ is nonnegative, and $t>1$ is to be determined later. Then multiplying both sides of \eqref{Prel-9} with $\psi$ and integrating it, we obtain
\begin{align}\label{Prel-10}
&\int_{\Omega}[Q^{1-\frac{2}{p}}\nabla Q+(p-2)Q^{1-\frac{4}{p}}\langle\nabla Q,\nabla h\rangle\nabla h]\nabla(Q^{t}\eta^{2})d\mu+a_{1}\int_{\Omega}Q^{t+2}h^{-2}\eta^{2}d\mu\nonumber\\\leq
&\frac{\sigma pm(p-1)}{m(p-\sigma-1)+2(p-1)}\int_{\Omega}Q^{t+1-\frac{2}{p}}h^{-1}\eta^{2}\langle\nabla Q\nabla h\rangle d\mu\notag\\
&+p(m-1)K\int_{\Omega}Q^{t+2-\frac{2}{p}}\eta^{2} d\mu,
\end{align}
where $a_{1}=\frac{[\sigma^{2}m(p-1)^{2}+\sigma m^{2}(p-1)(p-\sigma-1)+2\sigma m(p-1)^{2}]p}{[m(p-\sigma-1)+2(p-1)]^{2}}$. Hence, we obtain
\begin{align}\label{Prel-11}
&t\int_{\Omega}Q^{t-\frac{2}{p}}\lvert\nabla Q\rvert^{2}\eta^{2}d\mu+2\int_{\Omega}Q^{t+1-\frac{2}{p}}\eta\langle \nabla Q,\nabla\eta\rangle d\mu+a_{1}\int_{\Omega}Q^{t+2}h^{-2}\eta^{2}d\mu\nonumber\\&+2(p-2)\int_{\Omega}Q^{t+1-\frac{4}{p}}\langle\nabla Q,\nabla h\rangle\langle\nabla h,\nabla\eta\rangle\eta d\mu+t(p-2)\int_{\Omega}Q^{t-\frac{4}{p}}\langle\nabla Q,\nabla h \rangle^{2}\eta^{2}d\mu\nonumber\\ \leq&\frac{\sigma pm(p-1)}{m(p-\sigma-1)+2(p-1)}\int_{\Omega}Q^{t+1-\frac{2}{p}}h^{-1}\eta^{2}\langle\nabla Q\nabla h\rangle d\mu\notag\\
&+p(m-1)K\int_{\Omega}Q^{t+2-\frac{2}{p}}\eta^{2} d\mu.
\end{align}

For convenience, we divide the range of $p$ into two cases as follows.

{\em Case 1}: $p\geq2$, so \eqref{Prel-11} can be written as
\begin{align}\label{Prel-12}
&t\int_{\Omega}Q^{t-\frac{2}{p}}\lvert\nabla Q\rvert^{2}\eta^{2}d\mu+a_{1}\int_{\Omega}Q^{t+2}h^{-2}\eta^{2}d\mu\nonumber\\ \leq&\frac{\sigma pm(p-1)}{m(p-\sigma-1)+2(p-1)}\int_{\Omega}Q^{t+1-\frac{2}{p}}h^{-1}\eta^{2}\langle\nabla Q\nabla h\rangle d\mu+p(m-1)K\int_{\Omega}Q^{t+2-\frac{2}{p}}\eta^{2} d\mu\nonumber\\&+2(p-1)\int_{\Omega}Q^{t+1-\frac{2}{p}}\lvert \nabla Q\rvert\lvert\nabla\eta\rvert\eta d\mu.
\end{align}
Using the Cauchy inequality, we have
$$
2(p-1)Q^{t+1-\frac{2}{p}}\lvert\nabla Q\rvert\lvert\nabla\eta\rvert\eta\leq\frac{t}{4}Q^{t-\frac{2}{p}}\lvert\nabla Q\rvert^{2}\eta^{2}+\frac{4(p-1)^{2}}{t}Q^{t+2-\frac{2}{p}}\lvert\nabla\eta\rvert^{2}
$$
and
\begin{align}
&\frac{\sigma pm(p-1)}{m(p-\sigma-1)+2(p-1)}Q^{t+1-\frac{2}{p}}h^{-1}\eta^{2}\langle\nabla Q\nabla h\rangle\nonumber\\
\leq&\frac{t}{4}Q^{t-\frac{2}{p}}\lvert\nabla Q\rvert^{2}\eta^{2}+\frac{\sigma^{2} p^{2}m^{2}(p-1)^{2}}{t[m(p-\sigma-1)+2(p-1)]^{2}}Q^{t+2}h^{-2}\eta^{2}\nonumber.
\end{align}
Inserting the above two inequalities into \eqref{Prel-12} and choosing $t$ large enough such that
$$\frac{a_{1}}{2}-\frac{\sigma^{2} p^{2}m^{2}(p-1)^{2}}{t[m(p-\sigma-1)+2(p-1)]^{2}}>0,$$
then we obtain
\begin{align}\label{Prel-13}
&\frac{t}{2}\int_{\Omega}Q^{t-\frac{2}{p}}\lvert\nabla Q\rvert^{2}\eta^{2}d\mu+\frac{a_{1}}{2}\int_{\Omega}Q^{t+2}h^{-2}\eta^{2}d\mu\nonumber\\ \leq&\frac{4(p-1)^{2}}{t}\int_{\Omega}Q^{t+2-\frac{2}{p}}\lvert\nabla\eta\rvert^{2}d\mu+p(m-1)K\int_{\Omega}Q^{t+2-\frac{2}{p}}\eta^{2} d\mu.
\end{align}

On the other hand, by the Cauchy inequality, we have
\begin{align}\label{Prel-14}
\lvert\nabla( Q^{\frac{t}{2}+1-\frac{1}{p}}\eta)\rvert^{2}\leq2(\frac{t}{2}+1-\frac{1}{p})^{2}Q^{t-\frac{2}{p}}\lvert\nabla Q\rvert^{2}\eta^{2}+2Q^{t+2-\frac{2}{p}}\lvert\nabla\eta\rvert^{2},
\end{align}
so \eqref{Prel-13} can be written as
\begin{align}\label{Prel-15}
&\frac{a_{1}}{2}\int_{\Omega}Q^{t+2}h^{-2}\eta^{2}d\mu+\frac{t}{4(\frac{t}{2}+1-\frac{1}{p})^{2}}\int_{\Omega}\lvert\nabla (Q^{\frac{t}{2}+1-\frac{1}{p}}\eta)\rvert^{2}d\mu\nonumber\\\leq&\frac{4(p-1)^{2}}{t}\int_{\Omega}Q^{t+2-\frac{2}{p}}\lvert\nabla\eta\rvert^{2}d\mu+p(m-1)K\int_{\Omega}Q^{t+2-\frac{2}{p}}\eta^{2}d\mu
\nonumber\\&+\frac{t}{2(\frac{t}{2}+1-\frac{1}{p})^{2}}\int_{\Omega}Q^{t+2-\frac{2}{p}}\lvert\nabla\eta\rvert^{2}d\mu.
\end{align}
We choose $a_{2}$ and $a_{3}$ depending on $m,p$ such that
$$\frac{a_{2}}{t}\leq\frac{t}{4(\frac{t}{2}+1-\frac{1}{p})^{2}}$$
and
$$\frac{4(p-1)^{2}}{t}+\frac{t}{2(\frac{t}{2}+1-\frac{1}{p})^{2}}\leq\frac{a_{3}}{t},$$
then \eqref{Prel-15} gives
\begin{align}\label{Prel-16}
&\frac{a_{1}}{2}\int_{\Omega}Q^{t+2}h^{-2}\eta^{2}d\mu+\frac{a_{2}}{t}\int_{\Omega}\lvert\nabla (Q^{\frac{t}{2}+1-\frac{1}{p}}\eta)\rvert^{2}d\mu\nonumber\\\leq&\frac{a_{3}}{t}\int_{\Omega}Q^{t+2-\frac{2}{p}}\lvert\nabla\eta\rvert^{2}d\mu+p(m-1)K\int_{\Omega}Q^{t+2-\frac{2}{p}}\eta^{2}d\mu.
\end{align}

Replacing $\varphi$ with $Q^{\frac{t}{2}+1-\frac{1}{p}}\eta$ in \eqref{Prel-7} gives
\begin{align}
&e^{-C(m)(1+\sqrt{K}R)}V_{f}^{\frac{2}{m}}R^{-2}\Big(\int_{\Omega}Q^{\frac{2m}{m-2}(\frac{t}{2}+1-
\frac{1}{p})}\eta^\frac{2m}{m-2}\,d\mu\Big)^{\frac{m-2}{m}}\nonumber\\ \leq& \int_{\Omega}|\nabla
(Q^{\frac{t}{2}+1-\frac{1}{p}}\eta)|^2\,d\mu+R^{-2}\int_{\Omega}Q^{t+2-\frac{2}{p}}\eta^2\,d\mu.\nonumber
\end{align}
Substituting the above inequality into \eqref{Prel-16} yields
\begin{align}\label{Prel-17}
&\frac{a_{1}}{2}\int_{\Omega}Q^{t+2}h^{-2}\eta^{2}d\mu+\frac{a_{2}}{t}e^{-C(m)(1+\sqrt{K}R)}V_{f}^{\frac{2}{m}}R^{-2}\Big(\int_{\Omega}Q^{\frac{2m}{m-2}(\frac{t}{2}+1-
\frac{1}{p})}\eta^\frac{2m}{m-2}\,d\mu\Big)^{\frac{m-2}{m}}\nonumber\\\leq&\frac{a_{3}}{t}\int_{\Omega}Q^{t+2-\frac{2}{p}}\lvert\nabla\eta\rvert^{2}d\mu+
p(m-1)K\int_{\Omega}Q^{t+2-\frac{2}{p}}\eta^{2}d\mu
+\frac{a_{2}}{t}R^{-2}\int_{\Omega}Q^{t+2-\frac{2}{p}}\eta^{2}d\mu.
\end{align}
Now we set $t_{0}=C_{1}(m,p)(1+\sqrt{K}R)$, where
$$C_{1}(m,p)=\mathrm{max}\Big\{C(m), \frac{2\sigma^{2} p^{2}m^{2}(p-1)^{2}}{a_{1}[m(p-\sigma-1)+2(p-1)]^{2}}, 2\Big\}.$$
By the definition of $t_{0}$ it is easy to see that for $t>t_{0}$, the inequality
$$\frac{a_{1}}{2}-\frac{\sigma^{2} p^{2}m^{2}(p-1)^{2}}{t[m(p-\sigma-1)+2(p-1)]^{2}}>0$$
holds, and there exists $a_{4}>0$ such that
$$p(m-1)KR^{2}+\frac{a_{2}}{t}\leq a_{4}t_{0}^{2}=a_{4}C_{1}^{2}(m,p)(1+\sqrt{K}R)^{2}.$$
According to \eqref{Prel-17} and above inequalities, we obtain
\begin{align}\label{Prel-18}
&\frac{a_{1}}{2}\int_{\Omega}Q^{t+2}h^{-2}\eta^{2}d\mu+\frac{a_{2}}{t}e^{-C(m)(1+\sqrt{K}R)}V_{f}^{\frac{2}{m}}R^{-2}\Big(\int_{\Omega} Q^{(\frac{t}{2}+1-\frac{1}{p})\frac{2m}{m-2}}\eta^{\frac{2m}{m-2}}d\mu\Big)^{\frac{m-2}{m}}\nonumber\\\leq&\frac{a_{3}}{t}\int_{\Omega}Q^{t+2-\frac{2}{p}}\lvert\nabla\eta\rvert^{2}d\mu
+a_{4}t_{0}^{2}R^{-2}\int_{\Omega}Q^{t+2-\frac{2}{p}}\eta^{2}d\mu.
\end{align}
Since $h=u^{\frac{m[p-(\sigma+1)]+2(p-1)}{(m+2)(p-1)}}\leq M_{0}^{\frac{m[p-(\sigma+1)]+2(p-1)}{(m+2)(p-1)}}$, we have
\begin{align}\label{Prel-19}
&\frac{a_{5}}{2}\int_{\Omega}Q^{t+2}\eta^{2}d\mu+\frac{a_{2}}{t}e^{-C(m)(1+\sqrt{K}R)}V_{f}^{\frac{2}{m}}R^{-2}\Big(\int_{\Omega} Q^{(\frac{t}{2}+1-\frac{1}{p})\frac{2m}{m-2}}\eta^{\frac{2m}{m-2}}d\mu\Big)^{\frac{m-2}{m}}\nonumber\\\leq&\frac{a_{3}}{t}\int_{\Omega}Q^{t+2-\frac{2}{p}}\lvert\nabla\eta\rvert^{2}d\mu
+a_{4}t_{0}^{2}R^{-2}\int_{\Omega}Q^{t+2-\frac{2}{p}}\eta^{2}d\mu,
\end{align}
where $a_{5}=\frac{a_{1}}{M_{0}^{\frac{2m[p-(\sigma+1)]+4(p-1)}{(m+2)(p-1)}}}$.

{\em Case 2}: $1<p<2$, then \eqref{Prel-11} can be written as
\begin{align}\label{add-1}
&t(p-1)\int_{\Omega}Q^{t-\frac{2}{p}}\lvert\nabla Q\rvert^{2}\eta^{2}d\mu+a_{1}\int_{\Omega}Q^{t+2}h^{-2}\eta^{2}d\mu\nonumber\\
\leq&\frac{\sigma pm(p-1)}{m(p-\sigma-1)+2(p-1)}\int_{\Omega}Q^{t+1-\frac{2}{p}}h^{-1}\eta^{2}\langle\nabla Q\nabla h\rangle d\mu\notag\\
&+p(m-1)K\int_{\Omega}Q^{t+2-\frac{2}{p}}\eta^{2} d\mu+2(p-1)\int_{\Omega} Q^{t+1-\frac{2}{p}}|\nabla Q||\nabla\eta|.
\end{align}
Using the Cauchy inequality, we have
$$
2(p-1)Q^{t+1-\frac{2}{p}}\lvert\nabla Q\rvert\lvert\nabla\eta\rvert\eta\leq\frac{t(p-1)}{4}Q^{t-\frac{2}{p}}\lvert\nabla Q\rvert^{2}\eta^{2}+\frac{4(p-1)}{t}Q^{t+2-\frac{2}{p}}\lvert\nabla\eta\rvert^{2}
$$
and
\begin{align}
&\frac{\sigma pm(p-1)}{m(p-\sigma-1)+2(p-1)}Q^{t+1-\frac{2}{p}}h^{-1}\eta^{2}(\nabla Q\nabla h)\nonumber\\
\leq&\frac{t(p-1)}{4}Q^{t-\frac{2}{p}}\lvert\nabla Q\rvert^{2}\eta^{2}+\frac{\sigma^{2} p^{2}m^{2}(p-1)}{t[m(p-\sigma-1)+2(p-1)]^{2}}Q^{t+2}h^{-2}\eta^{2}\nonumber.
\end{align}
Inserting the above two inequalities into \eqref{add-1} and choosing $t$ large enough such that
$$\frac{a_{1}}{2}-\frac{\sigma^{2} p^{2}m^{2}(p-1)}{t[m(p-\sigma-1)+2(p-1)]^{2}}>0,$$
then we obtain
\begin{align}\label{add-2}
&\frac{t(p-1)}{2}\int_{\Omega}Q^{t-\frac{2}{p}}\lvert\nabla Q\rvert^{2}\eta^{2}d\mu+\frac{a_{1}}{2}\int_{\Omega}Q^{t+2}h^{-2}\eta^{2}d\mu\nonumber\\ \leq&\frac{4(p-1)}{t}\int_{\Omega}Q^{t+2-\frac{2}{p}}\lvert\nabla\eta\rvert^{2}d\mu+p(m-1)K\int_{\Omega}Q^{t+2-\frac{2}{p}}\eta^{2} d\mu.
\end{align}

On the other hand, by the Cauchy inequality, we have
\begin{align}\label{add-3}
\lvert\nabla(Q^{\frac{t}{2}+1-\frac{1}{p}}\eta)\rvert^{2}\leq2(\frac{t}{2}+1-\frac{1}{p})^{2}Q^{t-\frac{2}{p}}\lvert\nabla Q\rvert^{2}\eta^{2}+2Q^{t+2-\frac{2}{p}}\lvert\nabla\eta\rvert^{2},
\end{align}
so \eqref{add-2} can be written as
\begin{align}\label{add-4}
&\frac{a_{1}}{2}\int_{\Omega}Q^{t+2}h^{-2}\eta^{2}d\mu+\frac{t(p-1)}{4(\frac{t}{2}+1-\frac{1}{p})^{2}}\int_{\Omega}\lvert\nabla (Q^{\frac{t}{2}+1-\frac{1}{p}}\eta)\rvert^{2}d\mu\nonumber\\\leq&\frac{4(p-1)}{t}\int_{\Omega}Q^{t+2-\frac{2}{p}}\lvert\nabla\eta\rvert^{2}d\mu+p(m-1)K\int_{\Omega}Q^{t+2-\frac{2}{p}}\eta^{2}d\mu
\nonumber\\&+\frac{t(p-1)}{2(\frac{t}{2}+1-\frac{1}{p})^{2}}\int_{\Omega}Q^{t+2-\frac{2}{p}}\lvert\nabla\eta\rvert^{2}d\mu.
\end{align}
We choose $b_{2}$ and $b_{3}$ depending on $m,p$ such that
$$\frac{b_{2}}{t}\leq\frac{t(p-1)}{4(\frac{t}{2}+1-\frac{1}{p})^{2}}$$
and
$$\frac{4(p-1)}{t}+\frac{t(p-1)}{2(\frac{t}{2}+1-\frac{1}{p})^{2}}\leq\frac{b_{3}}{t},$$
then \eqref{add-4} gives
\begin{align}\label{add-5}
&\frac{a_{1}}{2}\int_{\Omega}Q^{t+2}h^{-2}\eta^{2}d\mu+\frac{b_{2}}{t}\int_{\Omega}\lvert\nabla (Q^{\frac{t}{2}+1-\frac{1}{p}}\eta)\rvert^{2}d\mu\nonumber\\\leq&\frac{b_{3}}{t}\int_{\Omega}Q^{t+2-\frac{2}{p}}
\lvert\nabla\eta\rvert^{2}d\mu+p(m-1)K\int_{\Omega}Q^{t+2-\frac{2}{p}}\eta^{2}d\mu.
\end{align}

Next, the argument is identical to that following \eqref{Prel-16} in Case 1, so we have
\begin{align}\label{add-6}
&\frac{a_{5}}{2}\int_{\Omega}Q^{t+2}\eta^{2}d\mu+\frac{b_{2}}{t}e^{-C(m)(1+\sqrt{K}R)}V_{f}^{\frac{2}{m}}R^{-2}\Big(\int_{\Omega} Q^{(\frac{t}{2}+1-\frac{1}{p})\frac{2m}{m-2}}\eta^{\frac{2m}{m-2}}d\mu\Big)^{\frac{m-2}{m}}\nonumber\\\leq&\frac{b_{3}}{t}\int_{\Omega}Q^{t+2-\frac{2}{p}}\lvert\nabla\eta\rvert^{2}d\mu
+b_{4}t_{0}^{2}R^{-2}\int_{\Omega}Q^{t+2-\frac{2}{p}}\eta^{2}d\mu,
\end{align}
where $a_{5}=\frac{a_{1}}{M_{0}^{\frac{2m[p-(\sigma+1)]+4(p-1)}{(m+2)(p-1)}}}$.

Combining the above two cases, letting $c_{2}=\min\{a_{2},b_{2}\}$, $c_{3}=\max\{a_{3},b_{3}\}$ and $c_{4}=\max\{a_{4},b_{4}\}$, the proof of Lemma \ref{lem-2} is finished.

\begin{lem}\label{lem-3}
Let $(M^{n},g,d\mu)$ be an $n$-dimensional $(n>2)$ complete smooth metric measure space with $\mathrm{Ric}_{f,m}\geq-(m-1)Kg$, where $K$ is a nonnegative constant and $\Omega=B_{x_{0}}(R)\subset M$ be a geodesic ball. If the constants $a$, $p$, $\sigma$ conditions in Lemma \ref{lem-1}, then for $\beta=(t_{0}+2-\frac{2}{p})\frac{m}{m-2}$, there exists a nonnegative constant $a_{8}>0$ such that
\begin{align}\label{Prel-20}
\Big(\int_{B_{x_{0}}(\frac{3R}{4})}Q^{\beta}d\mu\Big)^{\frac{1}{\beta}}\leq a_{8}V_{f}^{\frac{1}{\beta}}(\frac{t_{0}}{R})^{p}.
\end{align}
\end{lem}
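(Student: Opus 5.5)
The plan is to specialize Lemma \ref{lem-2} to $t=t_{0}$ and to a cutoff $\eta$ adapted to $B_{x_{0}}(\frac{3R}{4})$. The term $\frac{a_{5}}{2}\int Q^{t+2}\eta^{2}$ on the left is then used to absorb the right-hand side by Young's inequality. This is the standard first step of the Nash--Moser scheme, as in \cite{WYWG2023,HeWangWei2024}.

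\emph{Choice of cutoff.} Take $\eta_{0}\in C_{0}^{\infty}(B_{x_{0}}(R))$ with $0\le\eta_{0}\le1$, $\eta_{0}\equiv1$ on $B_{x_{0}}(\frac{3R}{4})$ and $|\nabla\eta_{0}|\le\frac{C}{R}$. Set $\eta=\eta_{0}^{\,t_{0}+2}$, so that
$$R^{2}|\nabla\eta|^{2}\le C t_{0}^{2}\eta^{\frac{2(t_{0}+2-\frac{2}{p})}{t_{0}+2}}.$$
With $t=t_{0}$ in \eqref{Prel-8}, and since $t_{0}\ge2$ and $\frac{c_{3}}{t_{0}}\le c_{3}$, the whole right-hand side is bounded by
$$C t_{0}^{2}R^{-2}\int_{\Omega}Q^{t_{0}+2-\frac{2}{p}}\eta^{\frac{2(t_{0}+2-\frac{2}{p})}{t_{0}+2}}d\mu .$$

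\emph{Absorption.} Apply Young's inequality with exponents $\frac{t_{0}+2}{t_{0}+2-2/p}$ and $\frac{t_{0}+2}{2/p}$:
$$C t_{0}^{2}R^{-2}Q^{t_{0}+2-\frac{2}{p}}\eta^{\frac{2(t_{0}+2-2/p)}{t_{0}+2}}\le\frac{a_{5}}{4}Q^{t_{0}+2}\eta^{2}+C'\Big(\frac{t_{0}^{2}}{R^{2}}\Big)^{\frac{p(t_{0}+2)}{2}},$$
where $C'$ has at most exponential dependence on $t_{0}$, of the form $C^{t_{0}}$. After absorbing the first term into the left side of \eqref{Prel-8} and integrating the constant over $\Omega$, we get
$$\frac{c_{2}}{t_{0}}e^{-C(m)(1+\sqrt KR)}V_{f}^{\frac{2}{m}}R^{-2}\Big(\int_{B_{x_{0}}(\frac{3R}{4})}Q^{\beta}d\mu\Big)^{\frac{m-2}{m}}\le C^{t_{0}}V_{f}\Big(\frac{t_{0}}{R}\Big)^{p(t_{0}+2)}.$$
The power $\frac{m-2}{m}=\frac{t_{0}+2-2/p}{\beta}$ is applied to $\int Q^{\beta}$ with $\beta=(t_{0}+2-\frac{2}{p})\frac{m}{m-2}$.

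\emph{Conclusion.} Take the $(t_{0}+2-\frac{2}{p})$-th root of both sides. Three facts are used:
\begin{itemize}
\item the relation $t_{0}\ge C(m)(1+\sqrt K R)$, which gives $e^{C(m)(1+\sqrt KR)/(t_{0}+2-2/p)}\le e$;
\item the fact that $t_{0}^{1/t_{0}}$ and $C^{t_{0}/(t_{0}+2-2/p)}$ are bounded;
\item the identity $V_{f}^{(1-\frac{2}{m})/(t_{0}+2-\frac{2}{p})}=V_{f}^{1/\beta}$.
\end{itemize}
The $R$-powers combine as $R^{2/(t_{0}+2-2/p)}\cdot(t_{0}/R)^{p(t_{0}+2)/(t_{0}+2-2/p)}$. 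The main obstacle is this exponent bookkeeping: the exponent $\frac{p(t_{0}+2)}{t_{0}+2-2/p}$ exceeds $p$ only by $O(1/t_{0})$. The excess factor is $(t_{0}/R)^{O(1/t_{0})}$, and the leftover $R^{2/(\cdot)}$ contributes $R^{O(1/t_{0})}$. I would first try to control these using $t_{0}\ge 2$ and $t_{0}\ge C\sqrt K R$. If that is not uniform in $R$ when $K=0$, the fallback is to absorb the factors into the same scaling in which Theorem \ref{thm-1} is stated. This yields \eqref{Prel-20} with $a_{8}=a_{8}(m,p,M_{0})$.
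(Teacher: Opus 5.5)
Your route is the paper's: take $t=t_0$ in \eqref{Prel-8}, use a high power of a cutoff, absorb the right-hand side into $\frac{a_5}{2}\int Q^{t_0+2}\eta^2$, and take the $(t_0+2-\frac2p)$-th root. The paper handles the $c_4$-term by splitting $\Omega$ along the level set $D$ instead of using Young, which is equivalent. As written, however, your proposal does not reach \eqref{Prel-20}.

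\textbf{The unresolved final step.} The decisive gap is the step you flag as the main obstacle and leave open. Estimating $R^{2/(t_0+2-2/p)}$ and $(t_0/R)^{2/(t_0+2-2/p)}$ separately cannot work. For $K=0$, $t_0=C_1(m,p)$ is a fixed number, so $R^{2/(t_0+2-2/p)}$ is unbounded in $R$. Your fallback of ``absorbing into the scaling of Theorem \ref{thm-1}'' is not an argument, since it would change the statement. The missing observation is that the two factors cancel exactly:
\[
R^{2}\Big(\frac{t_0}{R}\Big)^{p(t_0+2)}=t_0^{2}\Big(\frac{t_0}{R}\Big)^{p(t_0+2)-2}=t_0^{2}\Big(\frac{t_0}{R}\Big)^{p(t_0+2-\frac{2}{p})}.
\]
The root therefore gives $t_0^{2/(t_0+2-2/p)}(t_0/R)^{p}$, and $t_0^{2/(t_0+2-2/p)}\le t_0^{2/t_0}\le e^{2/e}$. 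This is why the paper writes the bound in \eqref{Prel-27} as $a_7^{t_0}V_f^{1-\frac2m}t_0^{3}(t_0^2/R^2)^{\frac p2(t_0+2)-1}$: the $R^{-2}$ in front of the Sobolev term absorbs one factor $t_0^2/R^2$.

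\textbf{The cutoff fails for $p>2$.} With $\eta=\eta_0^{t_0+2}$ you have $R^2|\nabla\eta|^2\le Ct_0^2\eta_0^{2t_0+2}$, while $\eta^{2(t_0+2-2/p)/(t_0+2)}=\eta_0^{2t_0+4-4/p}$. Your claimed inequality would need $\eta_0^{4/p-2}$ to be bounded. That fails where $\eta_0\to0$ as soon as $p>2$, which covers most of Case 1 and the whole range $p>m+1$. The power must be at least $\frac{p(t_0+2)}{2}$. The paper takes $\eta=\xi^{p(t_0+2)/2}$, which gives $|\nabla\eta|^2\le\frac{C^2p^2(t_0+2)^2}{4R^2}\eta^{\frac{2p(t_0+2)-4}{p(t_0+2)}}$, precisely the exponent needed for the H\"older/Young absorption. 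With these two corrections your argument proves the lemma with $a_8=a_8(m,p,M_0)$.
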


\proof
Now letting $t=t_{0}$ in \eqref{Prel-8}, we have
\begin{align}\label{Prel-21}
&\frac{a_{5}}{2}\int_{\Omega}Q^{t_{0}+2}\eta^{2}d\mu+\frac{c_{2}}{t_{0}}e^{-t_{0}}V_{f}^{\frac{2}{m}}R^{-2}\Big(\int_{\Omega} Q^{(\frac{t_{0}}{2}+1-\frac{1}{p})\frac{2m}{m-2}}\eta^{\frac{2m}{m-2}}d\mu\Big)^{\frac{m-2}{m}}\nonumber\\\leq&\frac{c_{3}}{t_{0}}\int_{\Omega}Q^{t_{0}+2-\frac{2}{p}}\lvert\nabla\eta\rvert^{2}d\mu
+c_{4}t_{0}^{2}R^{-2}\int_{\Omega}Q^{t_{0}+2-\frac{2}{p}}\eta^{2}d\mu.
\end{align}

Now we let $D=\Big\{x\in\Omega|Q(x)\geq(\frac{4c_{4}}{a_{5}})^{\frac{p}{2}}(\frac{t_{0}}{R})^{p}\Big\}$. Hence we have
\begin{align}\label{Prel-22}
&c_{4}t_{0}^{2}R^{-2}\int_{\Omega}Q^{t_{0}+2-\frac{2}{p}}\eta^{2}d\mu\nonumber\\=&c_{4}t_{0}^{2}R^{-2}\int_{D}Q^{t_{0}+2-\frac{2}{p}}\eta^{2}d\mu+c_{4}t_{0}^{2}R^{-2}\int_{\Omega \backslash D}Q^{t_{0}+2-\frac{2}{p}}\eta^{2}d\mu\nonumber\\ \leq&\frac{a_{5}}{4}\int_{\Omega}Q^{t_{0}+2}\eta^{2}d\mu+c_{4}t_{0}^{2}R^{-2}\big(\frac{4c_{4}}{a_{5}}\big)^{\frac{p}{2}(t_{0}+2)-1}\big(\frac{t_{0}}{R}\big)^{p(t_{0}+2)-2}V_{f}.
\end{align}
Combining \eqref{Prel-21} and \eqref{Prel-22}, we have
\begin{align}\label{Prel-23}
&\frac{a_{5}}{4}\int_{\Omega}Q^{t_{0}+2}\eta^{2}d\mu+\frac{c_{2}}{t_{0}}e^{-t_{0}}V_{f}^{\frac{2}{m}}R^{-2}\Big(\int_{\Omega} Q^{(\frac{t_{0}}{2}+1-\frac{1}{p})\frac{2m}{m-2}}\eta^{\frac{2m}{m-2}}d\mu\Big)^{\frac{m-2}{m}}\nonumber\\\leq&\frac{c_{3}}{t_{0}}\int_{\Omega}Q^{t_{0}+2-\frac{2}{p}}\lvert\nabla
\eta\rvert^{2}d\mu
+c_{4}t_{0}^{2}R^{-2}\big(\frac{4c_{4}}{a_{5}}\big)^{\frac{p}{2}(t_{0}+2)-1}\big(\frac{t_{0}}{R}\big)^{p(t_{0}+2)-2}V_{f}.
\end{align}
Assume $0\leq\xi\leq1, \ \xi\equiv1$ in $B_{x_{0}}(\frac{3R}{4})$ and $\lvert\nabla\xi\rvert\leq\frac{C}{R}$. Let $\eta=\xi^{\frac{p(t_{0}+2)}{2}}$. By a direct calculation, we have
\begin{align}\label{Prel-24}
c_{3}R^{2}\lvert\nabla\eta\rvert^{2}\leq \frac{c_{3}C^{2}p^2}{4}(t_{0}+2)^{2}\eta^{\frac{2p(t_{0}+2)-4}{p(t_{0}+2)}}\leq a_{6}t_{0}^{2}\eta^{\frac{2p(t_{0}+2)-4}{p(t_{0}+2)}}
\end{align}
and
\begin{align}\label{Prel-25}
&\frac{c_{3}}{t_{0}}\int_{\Omega}Q^{t_{0}+2-\frac{2}{p}}\lvert\nabla\eta\rvert^{2}d\mu\nonumber\\ \leq&\frac{a_{6}t_{0}}{R^{2}}\int_{\Omega}\Big(Q^{t_{0}+2-\frac{2}{p}}\eta^{\frac{2p(t_{0}+2)-4}
{p(t_{0}+2)}}(e^{-f})^{\frac{t_{0}+2-\frac{2}{p}}{t_{0}+2}}\Big)\Big((e^{-f})^{\frac{\frac{2}{p}}{t_{0}+2}}\Big)dv\nonumber\\
\leq&\frac{a_{6}t_{0}}{R^{2}}\Big(\int_{\Omega}Q^{t_{0}+2}\eta^{2}d\mu\Big)^{\frac{t_{0}+2-\frac{2}{p}}{t_{0}+2}}\Big(\int_{\Omega}d\mu\Big)^{\frac{\frac{2}{p}}{t_{0}+2}}\nonumber\\
=&\frac{a_{6}t_{0}}{R^{2}}\Big(\int_{\Omega}Q^{t_{0}+2}\eta^{2}d\mu\Big)^{\frac{t_{0}+2-\frac{2}{p}}{t_{0}+2}}V_{f}^{\frac{\frac{2}{p}}{t_{0}+2}},
\end{align}
where in the second inequality we use the H{\"o}lder inequality. Then, using the Young's inequality, we obtain
\begin{align}\label{Prel-26}
&\frac{a_{6}t_{0}}{R^{2}}\Big(\int_{\Omega}Q^{t_{0}+2}\eta^{2}d\mu\Big)^{\frac{t_{0}+2-\frac{2}{p}}{t_{0}+2}}V_{f}^{\frac{\frac{2}{p}}{t_{0}+2}}\nonumber\\
\leq&\frac{a_{6}t_{0}}{R^{2}}\Big[\frac{a_{5}R^{2}}{4a_{6}t_{0}}\frac{t_{0}+2-\frac{2}{p}}{t_{0}+2}
\Big(\big(\int_{\Omega}Q^{t_{0}+2}\eta^{2}d\mu\big)^{\frac{t_{0}+2-\frac{2}{p}}{t_{0}+2}}\Big)^{\frac{t_{0}+2}{{t_{0}+2-\frac{2}{p}}}}\nonumber\\
&+\big(\frac{a_{5}R^{2}}{4a_{6}t_{0}}\big)^{-\frac{t_{0}+2-\frac{2}{p}}{\frac{2}{p}}}\frac{\frac{2}{p}}{t_{0}+2}V_{f}\Big]\nonumber\\
\leq&\frac{a_{5}}{4}\int_{\Omega}Q^{t_{0}+2}\eta^{2}d\mu+\frac{a_{5}}{4}\big(\frac{4a_{6}t_{0}}{a_{5}R^{2}}\big)^{\frac{t_{0}+2}{\frac{2}{p}}}V_{f}.
\end{align}
Therefore, according to \eqref{Prel-23}-\eqref{Prel-26}, we have
\begin{align}\label{Prel-27}
&\Big(\int_{\Omega} Q^{(t_{0}+2-\frac{2}{p})\frac{m}{m-2}}\eta^{\frac{2m}{m-2}}d\mu\Big)^{\frac{m-2}{m}}\nonumber
\\\leq&\frac{t_{0}}{c_{2}}e^{t_{0}}V_{f}^{1-\frac{m}{2}}R^{2}\Big[\frac{c_{4}t_{0}^{2}}{R^{2}}
\Big(\frac{4c_{4}t_{0}^{2}}{a_{5}R^{2}}\Big)^{\frac{p}{2}(t_{0}+2)-1}+\frac{a_{6}t_{0}}{R^{2}}\Big(\frac{4a_{6}t_{0}}{a_{5}R^{2}}\Big)^{\frac{p}{2}(t_{0}+2)-1}\Big]\nonumber\\
\leq&a_{7}^{t_{0}}V_{f}^{1-\frac{2}{m}}t_{0}^{3}\Big(\frac{t_{0}^{2}}{R^{2}}\Big)^{\frac{p}{2}(t_{0}+2)-1},
\end{align}
where
$$a_{7}^{t_{0}}=2\mathrm{max}\Big\{\frac{c_{4}}{c_{2}}e^{t_{0}}\Big(\frac{4c_{4}}{a_{5}}\Big)^{\frac{p}{2}(t_{0}+2)-1}, \ \ \frac{a_{6}}{c_{2}}e^{t_{0}}\Big(\frac{4a_{6}}{a_{5}t_{0}}\Big)^{\frac{p}{2}(t_{0}+2)-1}\Big\},$$
which is equivalent to
\begin{align}\label{Prel-28}
&\Big(\int_{\Omega} Q^{(t_{0}+2-\frac{2}{p})\frac{m}{m-2}}\eta^{\frac{2m}{m-2}}d\mu\Big)^{\frac{m-2}{m}\frac{1}{t_{0}+2-\frac{2}{p}}}\leq a_{8}V_{f}^{\frac{m-2}{m}\frac{1}{t_{0}+2-\frac{2}{p}}}\Big(\frac{t_{0}}{R}\Big)^{p}.
\end{align}
We finish the proof of Lemma \ref{lem-3}.

Therefore, for \eqref{Prel-8}, by ignoring the first term on the left hand, we have
\begin{align}\label{Prel-29}
&\frac{c_{2}}{t}e^{-t_{0}}V_{f}^{\frac{2}{m}}R^{-2}\Big(\int_{\Omega}\lvert Q^{\frac{t}{2}+1-\frac{1}{p}}\eta\rvert^{\frac{2m}{m-2}}d\mu\Big)^{\frac{m-2}{m}}\nonumber\\
\leq& c_{4}t_{0}^{2}R^{-2}\int_{\Omega}Q^{t+2-\frac{2}{p}}\eta^{2}d\mu+\frac{c_{3}}{t}\int_{\Omega}Q^{t+2-\frac{2}{p}}\lvert\nabla\eta\rvert^{2}d\mu\nonumber\\
\leq&\int_{\Omega}\Big(\frac{c_{4}t_{0}^{2}}{R^{2}}\eta^{2}+\frac{c_{3}}{t}\lvert\nabla\eta\rvert^{2}\Big)Q^{t+2-\frac{2}{p}}d\mu,
\end{align}
which is equivalent to
\begin{align}\label{Prel-30}
&\Big(\int_{\Omega}\lvert Q^{\frac{t}{2}+1-\frac{1}{p}}\eta\rvert^{\frac{2m}{m-2}}d\mu\Big)^{\frac{m-2}{m}}\nonumber\\
\leq& V_{f}^{-\frac{2}{m}}e^{t_{0}}\int_{\Omega}\Big(\frac{c_{4}t_{0}^{2}t}{c_{2}}\eta^{2}+\frac{c_{3}R^{2}}{c_{2}}\lvert\nabla\eta\rvert^{2}\Big)Q^{t+2-\frac{2}{p}}d\mu.
\end{align}

Now, we are in the position to apply the Moser iteration. Let
$$\beta_{1}=\beta, \quad \beta_{l+1}=\beta_{l}\frac{m}{m-2}, \quad B_{l}=B(x_{0},\frac{R}{2}+\frac{R}{4^{l}}), \quad l=1,2,\cdots, \quad \Omega_{l}=B_{l}$$
and choose $\eta_{l}\equiv1$ in $B_{l+1},\ \eta_{l}\equiv 0$ in $B_{R}\backslash B_{l}, \lvert\nabla\eta_{l}\rvert\leq\frac{C4^{l}}{R}, 0\leq\eta\leq1$. By letting $t=t_{l}$, $\eta=\eta_{l}$, $t_{l}+2-\frac{2}{p}=\beta_{l}$, we have
\begin{align}\label{Prel-31}
&\Big(\int_{\Omega_{l}}\lvert Q^{\frac{t_{l}}{2}+1-\frac{1}{p}}\eta_{l}\rvert^{\frac{2m}{m-2}}d\mu\Big)^{\frac{m-2}{m}}\nonumber\\
\leq& V_{f}^{-\frac{2}{m}}e^{t_{0}}\int_{\Omega_{l}}\Big(\frac{c_{4}t_{0}^{2}t_{l}}{c_{2}}\eta_{l}^{2}+\frac{c_{3}R^{2}}
{c_{2}}\lvert\nabla\eta_{l}\rvert^{2}\Big)Q^{t_{l}+2-\frac{2}{p}}d\mu\nonumber\\
\leq& V_{f}^{-\frac{2}{m}}e^{t_{0}}\Big(\frac{c_{4}t_{0}^{2}t_{l}}{c_{2}}+\frac{c_{3}}{c_{2}}C^{2}16^{l}\Big)\int_{\Omega_{l}}Q^{t_{l}+2-\frac{2}{p}}d\mu\nonumber\\
\leq& V_{f}^{-\frac{2}{m}}e^{t_{0}}\Big(\frac{c_{4}t_{0}^{2}}{c_{2}}(t_{0}+2-\frac{2}{p})\Big(\frac{m}{m-2}\Big)^{l}+
\frac{c_{3}}{c_{2}}C^{2}16^{l}\Big)\int_{\Omega_{l}}Q^{t_{l}+2-\frac{2}{p}}d\mu\nonumber\\
\leq& V_{f}^{-\frac{2}{m}}e^{t_{0}}\Big(\frac{c_{4}t_{0}^{2}}{c_{2}}(t_{0}+2-\frac{2}{p})16^{l}+\frac{c_{3}}{c_{2}}C^{2}16^{l}\Big)\int_{\Omega_{l}}Q^{t_{l}+2-\frac{2}{p}}d\mu.
\end{align}
It is easy to see that we can find some constant $a_{9}$ such that
\begin{align}\label{Prel-32}
&\Big(\int_{\Omega_{l}}\lvert Q^{\frac{t_{l}}{2}+1-\frac{1}{p}}\eta_{l}\rvert^{\frac{2m}{m-2}}d\mu\Big)^{\frac{m-2}{m}}\leq a_{9}t_{0}^{3}16^{l}V_{f}^{-\frac{2}{m}}e^{t_{0}}\int_{\Omega_{l}}Q^{t_{l}+2-\frac{2}{p}}d\mu,
\end{align}
which is equivalent to
\begin{align}\label{Prel-33}
&\Big(\int_{\Omega_{l}}\lvert Q^{\frac{t_{l}}{2}+1-\frac{1}{p}}\eta_{l}\rvert^{\frac{2m}{m-2}}d\mu\Big)^{\frac{m-2}{m\beta_{l}}}\leq \Big(a_{9}t_{0}^{3}V_{f}^{-\frac{2}{m}}e^{t_{0}}\Big)^{\frac{1}{\beta_{l}}}16^{\frac{1}{\beta_{l}}}
\Big(\int_{\Omega_{l}}Q^{t_{l}+2-\frac{2}{p}}d\mu\Big)^{\frac{1}{\beta_{l}}}.
\end{align}
Thus,
\begin{align}\label{Prel-34}
\Big(\int_{\Omega_{l+1}}Q^{\beta_{l+1}}d\mu\Big)^{\frac{1}{\beta_{l+1}}}\leq\Big(a_{9}t_{0}^{3}V_{f}^{-\frac{2}{m}}e^{t_{0}}\Big)^{\frac{1}{\beta_{l}}}16^{\frac{1}{\beta_{l}}}
\Big(\int_{\Omega_{l}}Q^{t_{l}+2-\frac{2}{p}}d\mu\Big)^{\frac{1}{\beta_{l}}}
\end{align}
which implies
\begin{align}\label{Prel-35}
\Vert Q\Vert_{L_{f}^{\beta_{l+1}}(\Omega_{l+1})}\leq\Big(a_{9}t_{0}^{3}V_{f}^{-\frac{2}{m}}e^{t_{0}}\Big)^{\frac{1}{\beta_{l}}}16^{\frac{1}{\beta_{l}}}\Vert Q\Vert_{L_{f}^{\beta_{l}}(\Omega_{l})}.
\end{align}
By iteration we have
\begin{align}\label{Prel-36}
\Vert Q\Vert_{L_{f}^{\beta_{l+1}}(\Omega_{l+1})}\leq\Big(a_{9}t_{0}^{3}V_{f}^{-\frac{2}{m}}e^{t_{0}}\Big)^{\Sigma_{i=1}^{l}\frac{1}{\beta_{i}}}16^{\Sigma_{i=1}^{l}\frac{i}{\beta_{i}}}\Vert Q\Vert_{L_{f}^{\beta_{1}}(B_{x_{0}}(\frac{3R}{4}))}.
\end{align}
We note that
$$\sum_{i=1}^{\infty}\frac{1}{\beta_{i}}=\frac{m}{2\beta_{1}},\qquad \sum_{i=1}^{\infty}\frac{i}{\beta_{i}}=\frac{m^{2}}{4\beta_{1}},$$
then letting $l\rightarrow\infty$ in \eqref{Prel-36}, we have
\begin{align}
\Vert Q\Vert_{L_{f}^{\infty}(B_{o}(\frac{R}{2}))}\leq a_{10}V_{f}^{-\frac{1}{\beta_{1}}}\Vert Q\Vert_{L_{f}^{\beta_{1}}(B_{x_{0}}(\frac{3R}{4}))},
\end{align}
where $a_{10}\geq(a_{9}t_{0}^{3}e^{t_{0}})^{\frac{m}{2\beta_{1}}}16^{\frac{m^{2}}{4\beta_{1}}}$.
Combining with Lemma \ref{lem-3}, we get
\begin{align}
\lvert\nabla h\rvert\leq\frac{a_{11}(1+\sqrt{K}R)}{R},
\end{align}
where $a_{11}=(a_{8}a_{10})^{\frac{1}{p}}C_{1}$. Because $h=u^{\frac{m[p-(\sigma+1)]+2(p-1)}{(m+2)(p-1)}}$, so we have
\begin{align}
\frac{\lvert\nabla u\rvert}{u^{1-\frac{m[p-(\sigma+1)]+2(p-1)}{(m+2)(p-1)}}}\leq\frac{a_{12}(1+\sqrt{K}R)}{R},
\end{align}
where $a_{12}=\frac{(m+2)(p-1)a_{11}}{m[p-(\sigma+1)]+2(p-1)}$.
Therefore, we complete the proof of Theorem \ref{thm-1}.

\bibliographystyle{Plain}

\end{document}